\newtheorem{theorem}{Theorem}
\newtheorem{conjecture}{Conjecture}
\newtheorem{lemma}[theorem]{Lemma}
\newtheorem{corollary}[theorem]{Corollary}
\theoremstyle{definition}
\theoremstyle{remark}
\DeclareMathOperator{\lcm}{lcm}
\title{The lonely runner conjecture holds for eight runners}
\author{Matthieu Rosenfeld\thanks{
 This research was funded, in whole or in part, by the French National Research Agency (ANR) under grant agreement No. ANR-24-CE48-3758-01. In accordance with the objective of open access dissemination, the author applies a Creative Commons Attribution (CC-BY) license to any accepted article or manuscript (AAM) resulting from this submission.
}\\
LIRMM, Univ Montpellier, CNRS, Montpellier, France
}
\date{\today}
\begin{document}

\maketitle
\begin{abstract}
We prove that the lonely runner conjecture holds for eight runners. Our proof relies on a computer verification and on recent results that allow bounding the size of a minimal counterexample. We note that our approach also applies to the known cases with $4$, $5$, $6$, and $7$ runners. We expect that minor improvements to our approach could be enough to solve the cases of $9$ or $10$ runners.
\end{abstract}

\section{Introduction}

The Lonely Runner Conjecture is a well-known open problem in combinatorial number theory and Diophantine approximation. It was first introduced by Wills~\cite{phdWills1965,Wills1967Jun} with a purely number theoretic formulation and has since attracted significant attention. Cusick proposed another formulation of the problem in terms of view obstructions~\cite{Cusick1973}. The following runner interpretation  and current name of the conjecture is due to Goddyn \cite{Bienia1998Jan}. Consider $k+1$ runners with distinct constant speeds running around a unit-length circular track. The lonely runner conjecture asserts that for any runner there is a time at which the runner is at least a distance $1/(k+1)$ away from every other runner. The conjecture can be stated more formally as follows.
For any real $x$, $\|x\|$ is the distance from $x$ to the nearest integer.

\begin{conjecture}[Lonely Runner Conjecture]
For all integer $k \geq 1$, every set of distinct integers $v_1, \ldots, v_{k+1}$, and for all $i$, there exists a real number $t$ such that for every $j$, we have 
\[
  \|tv_i-tv_j\| \geq \frac{1}{k+1}.
\]
\end{conjecture}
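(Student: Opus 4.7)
The plan is to prove the Lonely Runner Conjecture one value of $k$ at a time by reducing each case to a finite computational check, and I should flag at the outset that this strategy, while able to settle an initial segment of cases, does not by itself yield the conjecture for all $k \geq 1$; that gap is the main obstacle.

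First I would apply the standard reductions. Replacing every $v_j$ by $v_j - v_i$ leaves $\|tv_i - tv_j\|$ unchanged, so it suffices to treat $i$ as a stationary runner, i.e.\ assume $v_i = 0$. A continuity plus rational approximation argument reduces the statement for real speeds to the corresponding statement for integer speeds, and after dividing by the gcd one may further assume the nonzero speeds are coprime. The conjecture then reduces to the following form: for every $k$-tuple of distinct nonzero coprime integers $w_1,\ldots,w_k$, there exists $t \in [0,1]$ such that $\|tw_j\| \geq 1/(k+1)$ for every $j$.

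Second, for each fixed $k$, I would invoke the recent structural results alluded to in the abstract in order to cap the largest $|w_j|$ in any minimal counterexample by some explicit function $F(k)$. Together with analogous constraints on the gaps between speeds, this reduces the problem to a finite enumeration of tuples, each of which can be ruled out by sampling $t$ on a mesh of spacing roughly $1/(2(k+1)\max_j|w_j|)$, since $t \mapsto \|tw_j\|$ is $|w_j|$-Lipschitz. For $k=7$ this is precisely the strategy the paper will carry out, and the same plan handles $k \in \{4,5,6\}$ as a byproduct.

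The main obstacle to closing the conjecture for all $k \geq 1$ is that both $F(k)$ and the number of tuples to enumerate grow superpolynomially, so the per-$k$ strategy only reaches a finite initial segment (currently $k \leq 7$, with some realistic hope for $k \in \{8,9\}$ after optimisation). A complete proof would require either a dramatic tightening of the minimal-counterexample bounds so that the finite check becomes uniform in $k$, or a genuinely new analytic ingredient, for instance a Fourier-theoretic estimate in the spirit of existing lower bounds $\|tv_j\| \geq c(k)/k$ sharpened to reach the threshold $1/(k+1)$, or an induction that reduces $(k+1)$-runner instances to $k$-runner ones, thereby bypassing computer verification entirely.
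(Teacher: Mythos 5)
The statement you are asked to prove is the full Lonely Runner Conjecture, for \emph{every} $k\ge 1$, and your proposal does not prove it --- as you yourself concede in the first and last paragraphs. A strategy that settles one value of $k$ at a time by a finite computation can only ever yield an initial segment of cases; it gives no argument that works uniformly in $k$, and no induction or analytic input that would close the general statement. The paper does not claim otherwise: it proves only the case $k=7$ (Theorem \ref{thm:8runners}), and the conjecture itself remains open. So the gap is not a fixable technical step but the absence of any mechanism covering all $k$; what you have written is a research programme, not a proof.

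Even restricted to the finite cases, your step two is not viable as stated and is not what the paper does. The bound of Malikiosis--Santos--Schymura (Theorem \ref{UpperBoundOnSum}) does cap the speeds of a minimal counterexample, but for $k=7$ the resulting number of $7$-tuples is astronomically large (the paper notes explicitly that listing all cases up to the bound is infeasible even for smaller $k$), so ``finite enumeration of tuples, each ruled out by sampling $t$ on a mesh'' cannot be carried out. The paper's actual route is different: it converts the bound into an upper bound on the \emph{product} of the speeds via AM--GM (Corollary \ref{cor:UpperBoundOnProduct}), then uses a covering-type computer search (Lemma \ref{lem:needOfCovering}, with the reduction through Lemma \ref{StrongerGCDCondition}) to show that any counterexample's product of speeds is divisible by $\lcm(2,\ldots,k+1)$ and by each prime in an explicit list; the product of these forced divisors exceeds the upper bound, a contradiction. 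The enumeration there is over residues modulo $(k+1)p$ --- roughly $\binom{(k+1)p}{k}$ cases per prime $p$ --- not over all bounded speed tuples, and this is precisely what makes the computation feasible. If you want to salvage your plan for a single $k$, you would need to replace your enumeration step by an argument of this divisibility/covering type (or something comparably economical); and to prove the conjecture itself you would need an idea that is uniform in $k$, which neither your proposal nor the paper supplies.
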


Other formulations of the problem allow $v_1, \ldots, v_k$ to be real numbers, but Wills proved that the two conjectures are equivalent, and we can focus our attention on the integral case \cite{Wills1968Jun}. Moreover, adding the same constant to all speeds does not change the instance of the problem, so one may, without loss of generality, set the speed of one runner to $0$. Finally, if one only cares about the distance to $0$, then the speeds $x$ and $-x$ play symmetric roles, which implies the following equivalent formulation of the conjecture.

\begin{conjecture}[Lonely Runner Conjecture]
For all integer $k \geq 1$, and every set of distinct positive integers $v_1, \ldots, v_{k}$, there exists a real number $t$ such that for every $i$, we have 
\[
  \|tv_i\| \geq \frac{1}{k+1}.
\]
\end{conjecture}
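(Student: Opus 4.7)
The plan is to reduce the statement for each fixed $k$ to a finite computational verification, and then execute that verification for $k = 7$; together with the known cases $k \leq 6$ from the literature, this settles the statement through eight runners. The reduction has two ingredients: a bound on the size of a minimal counterexample, and a finite-time procedure for checking any given speed tuple.

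First I would collect the normalizations that can be imposed on a hypothetical counterexample $(v_1, \ldots, v_7)$: one may take $v_1 < \cdots < v_7$ with $\gcd(v_1, \ldots, v_7) = 1$, and no proper sub-tuple is itself a counterexample for the corresponding smaller value of $k$ (otherwise the smaller case would already be open). Combining these with the recent quantitative results alluded to in the abstract, which bound $\max_i v_i$ for a minimal counterexample as a function of $k$, and with further pruning based on residues modulo small primes, should yield an explicit finite family of candidate tuples to inspect.

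Next, for each candidate tuple the task is to exhibit a time $t$ with $\|t v_i\| \geq 1/8$ for every $i$. Since each forbidden set $\{t : \|t v_i\| < 1/8\}$ is a finite union of arcs on $\R / \Z$, the search for $t$ is finite: it is enough to test $t \in \frac{1}{L}\Z$ for $L = \lcm(v_1, \ldots, v_7)$, or, more efficiently, just the finitely many breakpoints where the arrangement of forbidden arcs changes combinatorially. For a single tuple this is cheap; the cost of the whole verification is dominated by the number of tuples visited.

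The hard part, and the reason $k = 7$ has been out of reach until now, is keeping that enumeration actually tractable. Even with the best current bounds on $\max_i v_i$, a naive sweep over candidate tuples of length $7$ is huge, so the proof will depend on layered pruning: sieving modulo small primes, discarding whole families of tuples for which a symbolic witness $t$ can be given in closed form, and exploiting the observed sparsity of genuinely ``dangerous'' configurations. Designing these pruning rules, and --- more delicately --- certifying each one so that the final pipeline is provably equivalent to an exhaustive search, is where I expect the real work to lie; the Diophantine content of the conjecture itself is essentially packaged into the cited bounds on minimal counterexamples.
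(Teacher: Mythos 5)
Your plan is the ``bound a minimal counterexample, then enumerate'' route, and the enumeration step is exactly where it breaks down --- and you essentially concede this yourself when you say the pruning rules are ``where I expect the real work to lie.'' The quantitative bound you invoke (Malikiosis--Santos--Schymura, or Tao's earlier version) caps the speeds of a minimal counterexample for $k=7$ at roughly $\binom{8}{2}^{6}\approx 5\times 10^{8}$, so a sweep over $7$-tuples up to that bound involves on the order of $10^{56}$ candidates. No amount of residue sieving, symbolic witnesses for ``whole families,'' or observed sparsity is specified concretely enough to be certified as equivalent to the exhaustive search, and nothing in your write-up suggests a mechanism that would cut $10^{56}$ down to something a computer can traverse. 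As stated, the proposal is a program outline with its central step missing, not a proof. (A minor secondary issue: testing only $t\in\frac{1}{L}\Z$ with $L=\lcm(v_1,\ldots,v_7)$ does not suffice --- already for a single speed $v_1=1$ the only good times are far from $\frac{1}{L}\Z=\Z$; the breakpoint version of your per-tuple check is the correct one, though that step was never the bottleneck.)

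The missing idea, which is the actual content of the paper, is to bound a \emph{different} invariant and to extract divisibility information rather than enumerate counterexamples. From the cited bound one gets, via AM--GM, that a minimal counterexample for $k=7$ has $\prod_i v_i < \bigl[\binom{8}{2}^{6}/7\bigr]^{7} < 7.4\times 10^{54}$. Separately, for each fixed prime $p$ one can prove a lemma of the form: if a certain finite condition on residue tuples $v_1,\ldots,v_7 \in \{0,\ldots,8p-1\}$ (a covering-type condition checked at times $t/(8p)$, with a gcd side condition that exploits the known case $k=6$) holds, then \emph{every} counterexample has some speed divisible by $p$. That finite condition lives modulo $8p$, so for $p\le 163$ it is a search over a ground set of at most about $1300$ residues --- feasible with backtracking, unlike your enumeration. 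Verifying it for $\lcm(2,\ldots,8)$ and for the primes from $31$ to $163$ forces $\prod_i v_i$ to be divisible by a number exceeding $1.8\times 10^{55}$, contradicting the product bound; hence no counterexample exists. In short, your reduction-to-finite-check instinct matches the paper's, but the finite check must be reorganized around divisibility of the product of the speeds modulo small moduli, not around listing all bounded tuples; without that pivot the verification is hopeless.
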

We focus on this second formulation of the conjecture. In the remainder of this article, we say that a set $S\subseteq\mathbb{N}$ of size $k$ has the \emph{lonely runner property (abbreviated LR property)} if there exists $t$ such that for all $v\in S$, $\|tv\| \geq \frac{1}{k+1}.$

Over the years, the conjecture has been verified for small values of $k$. The case $k=2$ is trivial, and $k=3$ was settled by Betke and Wills~\cite{Betke1972Jun} and Cusick~\cite{Cusick1973, Cusick1974Jan, Cusick1982Jan}. The case $k=4$ was first resolved with the aid of computers~\cite{Cusick1984Oct} and later simplified~\cite{Bienia1998Jan}. For $k=5$, a proof was given by Bohman, Holzman, and Kleitman~\cite{Bohman2001Feb} and later simplified by Renault~\cite{Renault2004Oct}. The case $k=6$ was established by Barajas and Serra~\cite{Barajas2008Mar}. For a recent survey on the lonely runner conjecture see~\cite{Perarnau2025Nov}. In this article, we propose to settle the case $k=7$.
\begin{theorem}\label{thm:8runners}
For all set of integers $\{v_1,\ldots, v_7\}$ of size $7$ there exists a real number $t$ such that for all $i$,
\[
\|tv_i\| \geq \frac{1}{8}.
\]
\end{theorem}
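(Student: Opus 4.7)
The plan is to argue by contradiction: suppose some set $S=\{v_1,\ldots,v_7\}$ of seven distinct positive integers lacks the LR property, and derive a contradiction by combining a priori structural constraints on minimal counterexamples with an exhaustive computer check.

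First I would make the standard reductions. Dividing by the overall $\gcd$, we may assume $\gcd(v_1,\ldots,v_7)=1$, and we may fix a putative counterexample that is minimal under some natural ordering, such as minimizing $\max_i v_i$. At this stage I would invoke the recent bounds on minimal counterexamples referenced in the introduction: these assert that any minimal counterexample for eight runners has $\max_i v_i \le N$ for some explicit computable constant $N$, and they typically come with further arithmetic restrictions, such as divisibility conditions involving small primes, bounds on the number of speeds sharing a given residue modulo $8$, and constraints on sums of subsets of the $v_i$. I would collect these restrictions into a single combinatorial description of the admissible tuples.

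Next I would enumerate the admissible tuples $v_1<\cdots<v_7 \le N$ and test each one for the LR property. The key observation for the test is that the safe set
\[
  G(S) \;=\; \bigcap_{i=1}^{7}\bigl\{\,t\in[0,1)\,:\,\|tv_i\|\ge \tfrac18\,\bigr\}
\]
is an intersection of finite unions of rational intervals, and so can be computed exactly: each factor consists of $v_i$ intervals of length $3/(4v_i)$ centered at $(2j+1)/(2v_i)$, and the intersection reduces to straightforward interval arithmetic over $\Q$. Since the LR property for $S$ is equivalent to $G(S)\ne\emptyset$, the proof succeeds as soon as this condition can be verified for every admissible tuple.

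The main obstacle will be that the bound $N$ coming directly from the literature is still too large for a raw enumeration to be feasible, so the decisive engineering step is to combine every available structural bound to trim the search space, together with an aggressive branch-and-bound procedure on partial prefixes $(v_1,\ldots,v_j)$: once the partial safe set $\bigcap_{i\le j}\{t:\|tv_i\|\ge 1/8\}$ contains an interval long enough that no admissible completion $v_{j+1},\ldots,v_7\le N$ can cover it entirely, the prefix can be certified as safe and all its extensions skipped. Conversely, prefixes that are incompatible with the structural constraints (for instance, whose completion would violate a congruence obstruction) are pruned before descending. With sufficient pruning the computation should fit into a reasonable compute budget, and as the abstract notes, the same machinery should also reverify the cases $k\le 6$ as a consistency check.
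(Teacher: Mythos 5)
Your high-level skeleton (contradiction, a priori bound from the literature, computer verification) matches the paper's, but the mechanism you propose for the computer step is precisely the one the paper dismisses as infeasible, and your remedy (branch-and-bound pruning of actual tuples) does not close the gap. The bound from Malikiosis--Santos--Schymura controls $\sum_{S}\gcd(\{v_i:i\in S\})$ by $\binom{8}{2}^6\approx 4.8\times 10^8$; this gives $\max_i v_i \lesssim 5\times 10^8$, so the number of $7$-tuples up to that ceiling is on the order of $10^{57}$. Interval arithmetic and prefix pruning do not reduce this by the dozens of orders of magnitude needed, and the paper explicitly notes that listing all cases under the bound ``is not enough \ldots since there are too many cases to consider.''

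The idea you are missing is that one should \emph{not} enumerate candidate speed tuples at all. The paper instead bounds the \emph{product} $\prod_i v_i$ (Corollary~\ref{cor:UpperBoundOnProduct} gives $\prod_i v_i < 7.4\times 10^{54}$ for any minimal counterexample) and then proves, for each of a list of primes $p$, that every counterexample must have some $v_i$ divisible by $p$. The tool is Lemma~\ref{lem:needOfCovering}: it reduces the question ``does $p$ divide $\prod_i v_i$ for every counterexample?'' to a finite check over residues $v_1,\ldots,v_7 \in \{0,\ldots,8p-1\}$ not divisible by $p$, asking whether some time $t/(8p)$ separates them; this is a set-cover-type search over a space of size roughly $(7p)^7/7!$, which for $p\le 163$ is feasible. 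Once enough primes $p$ are certified this way, together with the small divisors from Lemma~\ref{smalldivisors}, their least common multiple exceeds the product bound, so no counterexample can exist. Thus the paper's computation works in residue space modulo $8p$ and concludes by a divisibility/size contradiction, not by scanning the (hopelessly large) set of admissible tuples; your proposal, as written, would not terminate in any realistic compute budget and so does not constitute a proof.
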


In 2019, Tao proved that there exists a computable constant $C>0$ such that if the lonely runner conjecture is true for all $k<k_0$, and if all $k_0$-tuples $v_1,\ldots, v_{k_0}$ with $\max_i v_i\le k_0^{C k_0^2}$ have the LR property, then the lonely conjecture holds for $k=k_0$ \cite{Tao2018Dec}. For any $k_0$, it is enough to check finitely many cases to verify the conjecture for all $k$ up to $k_0$. This result was recently improved by Malikiosis, Santos and Schymura who provided an explicit bound on the speeds that grows more slowly than the one provided by Tao \cite{Malikiosis2024Nov} (another similar but weaker bound was also proven by Giri and Kravitz~\cite{Giri2023Apr}). By itself this is not enough to verify the conjecture for $k=7$ (or even smaller values of $k$) by listing all cases since there are too many cases to consider. However, this result is key for our proof of the case $k=7$. Indeed, our strategy allows us to prove that if there is a counterexample, then this counterexample is large, which together with the aforementioned bound allows us to conclude that there is no counterexample.

Some partial results on the Lonely runner conjecture in the context of lacunary sequences were also obtained by different authors
\cite{BARAJAS20095687,CZERWINSKI20181301,DubickasLacunary,pandey2009note,RUZSA2002181}. A stronger version of these results can easily be deduced as a corollary of a recent result on lacunary sequences \cite{Rosenfeld2025Mar}. In Section \ref{sec:lacunary}, we present the context, and we provide the improved result.

\section{Upper bound on the product of the speeds in a minimal counterexample}
Let us first state precisely the result from Malikiosis, Santos and Schymura \cite{Malikiosis2024Nov}.
\begin{theorem}[{\cite[Theorem A]{Malikiosis2024Nov}}]\label{UpperBoundOnSum}
If the lonely runner conjecture holds for $k$, then it holds for $k+1$ for all $k$-tuple $v_1, \ldots, v_k$ such that $\gcd(v_1, \ldots, v_k)=1$, and
\[
\sum_{S\subseteq \{1,\ldots, k\}} \gcd(\{v_i: i\in S\}) >\binom{k+1}{2}^{k-1}.
\]
\end{theorem}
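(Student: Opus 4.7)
The plan is to argue by contrapositive: assume that a tuple $v_1,\ldots,v_k$ with $\gcd(v_1,\ldots,v_k)=1$ fails the LR property at level $1/(k+1)$, and derive the bound $\sum_S \gcd(\{v_i:i\in S\}) \le \binom{k+1}{2}^{k-1}$. Throughout I would use the assumed LR for smaller numbers of runners as an inductive tool, and I would view everything on the circle $\mathbb{R}/\mathbb{Z}$.

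First I would set up, for each $i$, the bad set $B_i=\{t\in[0,1):\|tv_i\|<1/(k+1)\}$, a union of $v_i$ open arcs of total length $2/(k+1)$. By assumption $\bigcup_i B_i=[0,1)$. The key reduction is via gcds: for any nonempty $S$ with $d_S=\gcd\{v_i:i\in S\}$, the rescaled tuple $(v_i/d_S)_{i\in S}$ has $\gcd$ equal to $1$ and at most $k$ entries, so by the assumed LR at the smaller level it admits a good time. Lifting this back produces a periodic union of ``good arcs'' of period $1/d_S$ in the original torus on which all runners in $S$ are simultaneously far from $0$. The non-existence of a global good $t$ must then come from conflicts between these periodic structures across different subsets $S$.

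Next I would combine these structures via a Fourier expansion of $\prod_i \mathbf{1}_{B_i^c}$ on the circle. Each nonzero Fourier coefficient is indexed by an integer vector $(n_1,\ldots,n_k)$ with $\sum n_i v_i =0$, supported on some subset $S$; its magnitude is controlled by Fej\'er-type kernels whose normalizations naturally involve $d_S$. Summing contributions and requiring the good set to be empty translates, after reorganizing by the support $S$, into an inequality whose dominant term scales like $\sum_S d_S / \binom{k+1}{2}^{k-1}$, which on rearrangement yields the claimed bound. The exponent $k-1$ would come from a product over $k-1$ independent directions in the lattice of integer relations among the $v_i$'s, while the base $\binom{k+1}{2}$ is the number of ordered pairs of runners, which naturally governs pairwise collisions at time $t$.

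The hardest step will be making this combinatorial/Fourier bookkeeping sharp enough to yield the precise constant $\binom{k+1}{2}^{k-1}$ rather than a looser $(k+1)^{O(k)}$ bound. This probably requires either a Minkowski / successive-minima argument on the $(k-1)$-dimensional lattice of integer relations $\{n\in\mathbb{Z}^k : n\cdot v = 0\}$, or an induction that removes one speed at a time while tracking exactly how the gcd-sum decreases, so that one factor of $\binom{k+1}{2}$ is paid per inductive step. My bet is on the geometry-of-numbers route, since the form of the bound closely resembles Minkowski-type volume estimates and naturally produces a product structure with $k-1$ factors.
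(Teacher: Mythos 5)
The paper states this as a cited result (Theorem~A of Malikiosis, Santos and Schymura~\cite{Malikiosis2024Nov}) without proof, so there is no in-paper argument to compare against; the only thing to assess is whether your plan would actually deliver a proof, and it would not in its current form. The gap falls exactly where you flag it: you never supply the mechanism that turns the Fourier sum over relation vectors $n$ with $n\cdot v=0$ into the quantity $\sum_{S}\gcd(\{v_i:i\in S\})$, nor any argument that produces the specific constant $\binom{k+1}{2}^{k-1}$. Saying that ``$k-1$ is the number of independent directions in the relation lattice'' and that ``$\binom{k+1}{2}$ counts pairs of runners'' explains the shape of the bound after the fact but is not a derivation. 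Two concrete obstacles on your route are left untouched: (i) the indicators $\mathbf{1}_{B_i^c}$ have only $O(1/|n|)$ Fourier decay, so you would need to pass to Selberg/Fej\'er-type majorants, at which point the resulting constants depend delicately on the choice of majorant and there is no reason they land on $\binom{k+1}{2}^{k-1}$; (ii) nothing in your setup singles out the gcd structure over a cruder function of the speeds, which is the whole content of the statement.

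Your fallback guess that a geometry-of-numbers argument on the relation lattice $\{n\in\mathbb{Z}^k:n\cdot v=0\}$ is needed is closer in spirit to the actual route. Malikiosis, Santos and Schymura recast the Lonely Runner problem for $v$ as a covering-radius bound for a $(k-1)$-dimensional lattice zonotope $Z_v$ attached to $v$, observe that $\sum_{S}\gcd(\{v_i:i\in S\})$ is exactly the number of lattice points of $Z_v$ (a standard gcd-of-minors count for lattice zonotopes), and then apply a general inequality relating the covering radius of a lattice polytope to its lattice-point count, with the inductive Lonely Runner hypothesis for $k$ handling the degenerate cases. The zonotope reformulation is what makes the gcd-sum appear for free; in your Fourier approach it would have to be extracted by hand, and that extraction---the step you call ``hardest''---is exactly what is missing.
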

Observe that we do not lose any generality with the condition $\gcd(v_1, \ldots, v_k)=1$, since any instance of the lonely runner conjecture can be reduced to an equivalent instance by dividing all speeds by their greatest common divisor.
This result is one of the main ingredients for our proof of the case with $8$ runners (although the weaker bound by Giri and Kravitz would probably suffice as well~\cite{Giri2023Apr}). It is tempting to simply check all such subsets, but there are too many such $7$-tuples. Instead of using this result directly, we will use the following more convenient corollary.

\begin{corollary}\label{cor:UpperBoundOnProduct}
If the lonely runner conjecture holds for $k$, then it holds for $k+1$ for all $k$-tuple $v_1, \ldots, v_k$ such that $\gcd(v_1, \ldots, v_k)=1$, and
\[
\prod_{i\in \{1,\ldots, k\}}v_i\ge  \left[\frac{\binom{k+1}{2}^{k-1}}{k}\right]^k\,.
\]
\end{corollary}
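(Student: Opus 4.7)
The plan is to deduce this corollary directly from Theorem \ref{UpperBoundOnSum} by lower-bounding the sum $\sum_{S} \gcd(\{v_i : i \in S\})$ in terms of the product $\prod_i v_i$, using only AM--GM and the observation that the full set $\{1,\ldots,k\}$ contributes a fixed amount.

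First I would split the sum according to subset size. The singleton subsets contribute $\sum_{i=1}^k v_i$, while each remaining subset $S$ of size at least $2$ contributes a positive integer. In particular, the subset $S=\{1,\ldots,k\}$ contributes $\gcd(v_1,\ldots,v_k)=1$ by the hypothesis that the speeds are coprime. Dropping all other non-singleton contributions (which are non-negative) gives
\[
\sum_{S \subseteq \{1,\ldots,k\}} \gcd(\{v_i : i \in S\}) \;\geq\; \sum_{i=1}^k v_i + 1.
\]

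Next I would apply the AM--GM inequality to the speeds together with the hypothesis on their product:
\[
\sum_{i=1}^k v_i \;\geq\; k \left(\prod_{i=1}^k v_i\right)^{1/k} \;\geq\; k \cdot \frac{\binom{k+1}{2}^{k-1}}{k} \;=\; \binom{k+1}{2}^{k-1}.
\]
Combining this with the previous inequality yields
\[
\sum_{S \subseteq \{1,\ldots,k\}} \gcd(\{v_i : i \in S\}) \;\geq\; \binom{k+1}{2}^{k-1} + 1 \;>\; \binom{k+1}{2}^{k-1},
\]
and Theorem \ref{UpperBoundOnSum} then gives the LR property for the $(k+1)$-tuple $(0,v_1,\ldots,v_k)$.

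There is essentially no obstacle: the proof is a one-line AM--GM application, and the only subtlety is securing the \emph{strict} inequality required by Theorem \ref{UpperBoundOnSum}, which is handled by the $+1$ coming for free from the hypothesis $\gcd(v_1,\ldots,v_k)=1$ applied to the full set.
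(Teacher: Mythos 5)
Your proposal is correct and takes essentially the same route as the paper: drop the non-singleton $\gcd$ contributions, apply AM--GM to the singletons, and invoke Theorem \ref{UpperBoundOnSum}. The only cosmetic difference is where the strict inequality is secured --- the paper notes $\sum_{S}\gcd(\{v_i:i\in S\}) > \sum_i v_i$ outright (since the remaining $2^k-k$ subsets contribute nonnegatively, at least one positively), whereas you isolate the $+1$ from the full set; both are valid.
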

\begin{proof}
  This is a direct consequence of Theorem \ref{UpperBoundOnSum}, and of
\[
\sum_{S\subseteq  \{1,\ldots, k\}} \gcd(\{v_i: i\in S\})
>\sum_{i \in \{1,\ldots, k\}} v_i
\ge  k \left[\prod_{i\in \{1,\ldots, k\}}v_i\right]^{1/k}
\]
where the last inequality is simply the AM-GM inequality.
\end{proof}

The idea behind our proof is to prove that in any counterexample to the lonely runner conjecture for $k=7$ the product of the speeds is larger than the bound given by Corollary \ref{cor:UpperBoundOnProduct}, which would imply that there is no such counterexample. For this, we will prove for many primes $p$ that if there is a counterexample then the product of the speeds is divisible by $p$. If the product of all such primes $p$ is larger than the bound from Corollary \ref{cor:UpperBoundOnProduct}, then the lonely runner conjecture is proven for $k$.

\section{Finding a prime that divides the product of the speeds}
In this section, we provide the different lemmas that allow us to find prime divisors of any counterexample to the lonely runner conjecture. We begin with a simple lemma that yields a few factors of the product of the elements of a counterexample.
\begin{lemma}\label{smalldivisors}
  Let $k \geq 1$, and $\{v_1, \ldots, v_k\}$ be a set of $k$ distinct integers that does not have the LR property then
  \[
 \lcm(2,\ldots,k+1) \text{ divides } \prod_{i\in \{1,\ldots, k\}}v_i.
  \]
\end{lemma}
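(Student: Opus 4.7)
The plan is to reduce the divisibility condition to a statement about prime powers, and then for each prime power to exhibit an explicit $t$ via the contrapositive. Since $\lcm(2,\ldots,k+1)=\prod_{p}p^{a_p}$ with $a_p=\lfloor\log_p(k+1)\rfloor$, it suffices to show that for every prime $p$ and every $a\ge 1$ with $p^a\le k+1$, the prime power $p^a$ divides $\prod_i v_i$. If all such $p^a$ divide the product, then their lcm (which is $\lcm(2,\ldots,k+1)$) does too.

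I would argue by contrapositive for a fixed prime power $p^a\le k+1$. Suppose $p^a\nmid\prod_i v_i$. Writing $v_p$ for the $p$-adic valuation, this is the same as $\sum_{i=1}^k v_p(v_i)<a$. The key simple observation is that since each $v_p(v_i)$ is a nonnegative integer, this implies $v_p(v_i)<a$ for every $i$ individually, i.e.\ $p^a\nmid v_i$ for all $i$. This passage from a sum condition to a pointwise condition is really the only content of the argument, and it is what lets us handle the case where no single $v_i$ carries all the $p$-divisibility.

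Now I take the explicit witness $t=1/p^a$. For each $i$, $tv_i=v_i/p^a$ is a rational number with denominator (in lowest terms) dividing $p^a$ but, by the previous step, not equal to $1$; hence the distance from $v_i/p^a$ to the nearest integer equals $r/p^a$ for some integer $r$ with $1\le r\le p^a/2$. In particular
\[
\|tv_i\|=\left\|\frac{v_i}{p^a}\right\|\ \ge\ \frac{1}{p^a}\ \ge\ \frac{1}{k+1},
\]
where the last inequality uses $p^a\le k+1$. This holds simultaneously for all $i$, so $\{v_1,\ldots,v_k\}$ has the LR property, contradicting the hypothesis. Hence $p^a\mid\prod_i v_i$ for every prime power $p^a\le k+1$, and the lemma follows.

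I do not expect any real obstacle: the whole proof hinges on the elementary fact that $\sum_i b_i<a$ with $b_i\ge 0$ forces each $b_i<a$, after which $t=1/p^a$ is the obvious choice. The only point to watch is to work prime-power by prime-power rather than divisor-by-divisor, since $q\mid\prod_i v_i$ with $q$ composite does \emph{not} imply any $v_i$ is divisible by $q$, and so no direct choice of $t=1/q$ would work for general $q\in\{2,\ldots,k+1\}$.
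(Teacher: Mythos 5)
Your proof is correct and uses essentially the same idea as the paper: take $t=1/j$ as the witness and observe that $j\nmid v_i$ forces $\|v_i/j\|\geq 1/j\geq 1/(k+1)$. One small correction to your closing remark: the restriction to prime powers is harmless but not necessary, and your stated reason is off — the implication you actually use is ``$q\nmid\prod_i v_i\Rightarrow q\nmid v_i$ for all $i$,'' which is trivially true for composite $q$ as well, and $q\nmid v_i$ still gives $\|v_i/q\|\geq 1/q$; indeed the paper runs this argument for \emph{every} $j\in\{2,\ldots,k+1\}$, not just prime powers.
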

\begin{proof}
Suppose that no $v_i$ is divisible by $j$, for some $j\in \{2,\ldots,k+1\}$. Any such $k$-uple has the LR property, since for all $i$, 
\[
\left\|\frac{v_i}{j}\right\|\ge \frac{1}{j}\ge\frac{1}{k+1}\,.
\] 
That is, for all $j\in \{2,\ldots,k+1\}$, any $k$-uple without the LR property contains a speed divisible by $j$, which concludes this proof.
\end{proof}

Before establishing our main tool to find prime factors, we recall the following sufficient condition for a set of integers to have the LR property. 
A similar remark was already used in \cite{Bienia1998Jan}.
\begin{lemma}\label{StrongerGCDCondition}
Let $k \geq 3$ be an integer such that the lonely runner conjecture holds for $k-1$. Let $v_1, \ldots, v_k$ be a set of integers with $\gcd(\{v_1, \ldots, v_k\})=1$ but $\gcd(v_1,\ldots, v_{k-1})\not=1$. Then $v_1, \ldots, v_k$ has the LR property.
\end{lemma}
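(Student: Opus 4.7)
My plan is to reduce the instance to a smaller one using the assumed non-trivial $\gcd(v_1,\ldots,v_{k-1})$, apply the lonely runner property for $k-1$ runners to this reduced instance, and then exploit an arithmetic progression of candidate times to push $v_k$ away from $0$ as well.

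First, set $d := \gcd(v_1,\ldots,v_{k-1})\ge 2$. Since $\gcd(v_1,\ldots,v_k)=1$, this forces $\gcd(d,v_k)=1$. Writing $w_i := v_i/d$ for $i=1,\ldots,k-1$ gives $k-1$ distinct integers, so by the assumed lonely runner property for $k-1$ runners there exists $s_0\in\R$ such that $\|s_0\,w_i\|\ge 1/k$ for every $i\le k-1$.

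Next, I consider the family of candidate times $t_m := (s_0+m)/d$ indexed by $m\in\{0,1,\ldots,d-1\}$. For $i\le k-1$, the quantity $t_m v_i=(s_0+m)w_i$ differs from $s_0 w_i$ by the integer $m w_i$, so $\|t_m v_i\|=\|s_0 w_i\|\ge 1/k\ge 1/(k+1)$ for every such $m$. It remains to choose $m$ so that $\|t_m v_k\|\ge 1/(k+1)$. Because $\gcd(d,v_k)=1$, the residues $\{m v_k \bmod d : m=0,\ldots,d-1\}$ equal $\{0,1,\ldots,d-1\}$, so the points $t_m v_k \bmod 1$ form a set of $d$ equally spaced points on $\R/\Z$ with common spacing $1/d$. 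By pigeonhole, at least one of these points lies within distance $1/(2d)$ of $1/2$, and hence at distance at least $(d-1)/(2d)$ from $0$ on the circle.

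The last ingredient is the numerical check that $(d-1)/(2d)\ge 1/(k+1)$ whenever $d\ge 2$ and $k\ge 3$; rearranging gives $2(k-1)\ge k+1$, i.e.\ $k\ge 3$, which is precisely the hypothesis. Picking $m$ that realises the farthest point and setting $t=t_m$ then yields $\|tv_i\|\ge 1/(k+1)$ for every $i\in\{1,\ldots,k\}$, proving the lonely runner property. I do not anticipate any serious obstacle: the only mildly delicate step is the tight inequality $(d-1)/(2d)\ge 1/(k+1)$, which is saturated exactly at the boundary case $(d,k)=(2,3)$, explaining why the assumption $k\ge 3$ is needed.
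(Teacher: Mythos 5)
Your proof is correct and follows essentially the same strategy as the paper's: reduce to the $(k-1)$-runner instance, form an arithmetic progression of $d$ candidate times that leaves the first $k-1$ norms unchanged, and use $\gcd(d,v_k)=1$ to obtain $d$ equally spaced values of $\|t_m v_k\|$, one of which is at distance at least $(d-1)/(2d)\ge 1/4\ge 1/(k+1)$ from $0$. The only cosmetic difference is that the paper writes the $(k-1)$-runner solution as a rational $a/b$ and works with the progression $\{(qa+jb)/(qb)\}_j$, whereas you keep a real $s_0$ and use $\{(s_0+m)/d\}_m$; your version sidesteps the (harmless, but unjustified in the paper) step of taking the witness to be rational.
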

\begin{proof}
Let $q=\gcd(v_1,\ldots, v_{k-1})$. By assumption $q\not=1$ and $v_k$ is not divisible by $q$.

Let $t=a/b$, be a lonely runner solution for $v_1,\ldots,v_{k-1}$, that is, for all $i\in \{1,\ldots, k-1\}$, 
\[
\left\|v_i\frac{a}{b}\right\|\ge \frac{1}{k}>\frac{1}{k+1}\,.
\]
For all $i$, $v_ib$ is divisible by $qb$. Hence, for all $t\in \{qa, qa+b,\ldots, qa+(q-1)b\}$ and for all $i\in \{1,\ldots, k-1\},$
\[
\left\|v_i\frac{t}{qb}\right\|=\left\|v_i\frac{a}{b}\right\|>\frac{1}{k+1}\,.
\]
Now, for $v_k$, we have for all $j$,
\[
\left\|v_k\frac{qa+jb}{qb}\right\|
=\left\|v_k\frac{a}{b} + \frac{jv_k}{q}\right\|\,.
\]
But since, $\gcd(v_k,q)=1$, we have 
\[
\left\{\left\|\frac{jv_k}{q}\right\|: j\in\mathbb{N}\right\}=  \left\{\left\|\frac{j}{q}\right\|: j\in\mathbb{N}\right\}\,
\]
which implies
\[
\left\{\left\|v_k\frac{qa+jb}{qb}\right\|: j\in\mathbb{N}\right\}
=\left\{\left\|v_k\frac{a}{b} + \frac{j}{q}\right\|: j\in\mathbb{N}\right\}\,.
\]
This set contains $q$ points evenly spaced out in the unit circle. In particular, since $q\ge2$ 
at least one of these choices of $t\in \{qa, qa+b,\ldots qa+(q-1)b\}$ is such that
\[
\left\|v_k\frac{t}{qb}\right\|\ge\frac{1}{4}\ge\frac{1}{k+1}\,,
\]
since $k\ge3$. This concludes the proof.
\end{proof}
We are now ready to prove our main lemma.
\begin{lemma}\label{lem:needOfCovering}
Let $k\ge3$ be an integer such that the lonely runner conjecture holds for $k-1$. Let $p\in \mathbb{N}$ be a positive integer such that for all 
$v_1,\ldots, v_k\in \{0,\ldots, (k+1)p-1\}$ with
\begin{itemize}
\item for all subset $S\in \{v_1,\ldots, v_k\}$ of size $k-1$, we have $\gcd(S\cup\{(k+1)p\})=1$,
\item for all $i$, $v_i$ is not divisible by $p$,
\end{itemize} 
there exists $t\in \{0,\ldots,(k+1)p-1\}$ such that for all $i$,
\[
\left\|\frac{tv_i}{(k+1)p}\right\|\ge \frac{1}{k+1}\,.
\]
Then for any set of $k$ distinct integers $\{v_1, \ldots, v_k\}$ that does not have the LR property, we have 
  \[
 p \text{ divides } \prod_{i\in \{1,\ldots, k\}}v_i.
  \]
\end{lemma}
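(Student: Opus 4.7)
I argue by contraposition: assuming $\{v_1,\ldots,v_k\}$ is a set of $k$ distinct integers without the LR property, I show that $p$ divides $\prod_i v_i$. First, since dividing all speeds by $\gcd(v_1,\ldots,v_k)$ preserves the LR property (a witness $t$ for one tuple gives a witness $t/d$ or $td$ for the other), I may assume $\gcd(v_1,\ldots,v_k)=1$. Next, if some $(k-1)$-subset of $\{v_1,\ldots,v_k\}$ had gcd greater than~$1$, then Lemma~\ref{StrongerGCDCondition} would yield the LR property, contradicting the hypothesis. So I may further assume that every $(k-1)$-subset of $\{v_1,\ldots,v_k\}$ has gcd~$1$.

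Suppose for contradiction that $p \nmid \prod_i v_i$, so that $p \nmid v_i$ for every~$i$. The plan is to apply the hypothesis to the reduced tuple $w_i := v_i \bmod (k+1)p \in \{0,\ldots,(k+1)p-1\}$. The two required conditions can be checked directly from the previous reductions. Since $p \mid (k+1)p$, the congruence $w_i \equiv v_i \pmod{p}$ gives $p \nmid w_i$. Moreover, for any $(k-1)$-subset of indices $T$, one has
\[
\gcd\bigl(\{w_i : i \in T\}\cup\{(k+1)p\}\bigr) = \gcd\bigl(\{v_i : i \in T\}\cup\{(k+1)p\}\bigr) = \gcd\bigl(1,(k+1)p\bigr) = 1,
\]
using that $w_i \equiv v_i \pmod{(k+1)p}$ and that every $(k-1)$-subset of the $v_i$'s has gcd~$1$.

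By the hypothesis, there exists $t \in \{0,\ldots,(k+1)p-1\}$ such that $\|tw_i/((k+1)p)\| \geq 1/(k+1)$ for all~$i$. Since $v_i \equiv w_i \pmod{(k+1)p}$ implies $tv_i \equiv tw_i \pmod{(k+1)p}$, the fractional parts of $tv_i/((k+1)p)$ and $tw_i/((k+1)p)$ coincide, so $t' := t/((k+1)p)$ is an LR-witness for $\{v_1,\ldots,v_k\}$, contradicting our assumption. Therefore $p$ divides $\prod_i v_i$.

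The step I expect to require the most care is the verification of the gcd condition for the tuple $(w_1,\ldots,w_k)$: this is exactly where Lemma~\ref{StrongerGCDCondition} is needed, since without ruling out $(k-1)$-subsets of the $v_i$'s with nontrivial common divisor one cannot guarantee that the reduced tuple meets the hypothesis, and the argument would break down. The remainder is routine bookkeeping that transports a rational LR-witness from the finite quotient modulo $(k+1)p$ back to the integers.
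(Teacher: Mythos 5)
Your proof is correct and follows essentially the same route as the paper's: WLOG reduce to $\gcd = 1$, reduce the speeds modulo $(k+1)p$, verify the two bulleted hypotheses for the reduced tuple (via Lemma~\ref{StrongerGCDCondition} for the gcd condition), apply the lemma's covering hypothesis, and transport the rational witness back to the original speeds. The only cosmetic difference is the order in which you handle the gcd condition: you first deduce from Lemma~\ref{StrongerGCDCondition} that every $(k-1)$-subset of the original speeds has gcd~$1$ and then transfer this to the residues, whereas the paper supposes a residue $(k-1)$-subset had a nontrivial common divisor $d$, lifts $d$ back to the originals using $d \mid (k+1)p$, and invokes Lemma~\ref{StrongerGCDCondition} to get a contradiction; these are the same computation read in opposite directions.
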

\begin{proof}
Let $p$ be as in the theorem statement and $\{w_1, \ldots, w_k\}$ be a set of distinct integers without the LR property. Without loss of generality, we can assume $\gcd(\{w_1, \ldots, w_k\})=1$, otherwise divide every $w_i$ by $\gcd(w_1, \ldots, w_k)$, this set does not have the LR property either and the product of the speed divides the previous product.

For all $i$, let $v_i$ be the remainder of the euclidean division of $w_i$ by $(k+1)p$. 

For all subset $S\in \{v_1,\ldots, v_k\}$ of size $k-1$, we have $\gcd(S\cup\{(k+1)p\})=1$. Indeed, suppose that $\gcd(\{v_1,\ldots, v_{k-1},(k+1)p\})=d\not=1$, then $\gcd(\{w_1,\ldots, w_{k-1}\})\not=1$ since it is divisible by $d$. By Lemma \ref{StrongerGCDCondition} and since $\gcd(\{w_1, \ldots, w_k\})=1$, it implies that $\{v_1, \ldots, v_k\}$ has the LR property which is a contradiction.

Finally, suppose for the sake of contradiction that $p$ does not divide any of the $w_i$. Then $p$ does not divide any of the $v_i$ either which implies by our theorem assumption that there exists $t\in \{0,\ldots,k+1\}$ such that for all $i$,
\[
\left\|\frac{tv_i}{(k+1)p}\right\|\ge \frac{1}{k+1}\,.
\]
But for all $i$, $w_i\equiv v_i \mod (k+1)p$, which implies for all $i$,
\[
\left\|\frac{tw_i}{(k+1)p}\right\|=\left\|\frac{tv_i}{(k+1)p}\right\|\ge \frac{1}{k+1}\,,
\]
which contradicts the fact that  $\{w_1, \ldots, w_k\}$ does not have the LR property. Hence, $p$ divides one of the $w_i$. This completes the proof.
\end{proof}

For given $k$ and $p$, we need to verify a property for roughly $\binom{(k+1)p}{k}$ $k$-uples to deduce that $p$ divides $\prod_{i\in \{1,\ldots, k\}}v_i$ for all $\{v_1, \ldots, v_k\}$ that does not have the LR property. It can be done with a computer.
For small values of $k$ (probably at least up to $k=6$), a naive implementation of this should be enough to apply our proof strategy. For $k=8$, we still exhaustively verify all $7$-uples with a backtracking algorithm, but we use a few branching techniques in the implementation to reduce the running time. We give some more details about the implementation in Section \ref{sec:implementation} (we also provide the implementation \cite{gitRepo}).

\section{Proof of the lonely runner conjecture for eight runners}
We rely on the fact that the lonely runner conjecture was already proven for 7 runners. We will see in the next section that our technique can also be used to prove the conjecture for $4$, $5$, $6$, and $7$ runners (assuming it holds for 2 runners).
\begin{proof}[Proof of Theorem \ref{thm:8runners}]
For the sake of contradiction suppose there exists a set $\{v_1,\ldots,v_7\}$ of $7$ distinct integers without the LR property. Without loss of generality, we may assume  $\gcd(\{v_1,\ldots,v_7\})=1$.
Let $\mathcal{P}=\prod_{i\in\{1,\ldots,7\}}v_i$.

By Corollary \ref{cor:UpperBoundOnProduct}, we have
\begin{equation}\label{eq:product8}
\mathcal{P} \le \left[\frac{\binom{8}{2}^{6}}{7}\right]^7 < 7,4\times 10^{54}\,.
\end{equation}

By Lemma \ref{smalldivisors}, $\mathcal{P}$ is divisible by $\lcm(\{2,3,4,5,6,7,8\})$.

Now using our computer program that verifies conditions of Lemma \ref{lem:needOfCovering}, we can verify that $\mathcal{P}$ is divisible by $p$, for all $p\in S$ where
 \begin{align*}
S=\{&31,37,43,47,53,59,61,67,71,73,79,83,89,97,101,103,\\
 &107,109,113,127,131,137,139,149,151,157,163\}\,.
 \end{align*}
 Hence, $\mathcal{P}$ is divisible by 
\[
 \lcm(\{2,3,4,5,6,7,8\}\cup S)=3\times5\times7\times8\times \prod_{p\in S}p\approx 1.82\times 10^{55}\,.
\]
Since $\mathcal{P}$ is a positive integer this contradicts \eqref{eq:product8}. Therefore, there does not exist such a set $\{v_1,\ldots,v_7\}$ of $7$ distinct integers without the LR property. This concludes the proof of the lonely runner conjecture for 8 runners.
\end{proof}

\section{The cases of 4, 5, 6 and 7 runners}
Unsurprisingly our proof can also be applied to the other known cases (technically this could certainly be applied to the case with 3 runners as well, but that would require writing a version of Lemma \ref{lem:needOfCovering} specifically for this case). This is the exact same idea, and for each case we simply provide in Table \ref{tab:parameters} the upper bound on the product, the list of primes for which we verified the conditions of Lemma \ref{lem:needOfCovering}, and the product of these primes.
\begin{table}[h]
    \footnotesize
    \begin{tabular}{|c|c|c|c|}\hline
         $k$ & $\left[\frac{\binom{k+1}{2}^{k-1}}{k}\right]^k$ & $S$ &  $\lcm(\{2,\ldots,k+1\}\cup S)$\\\hline\hline
         $3$ &  $1728$ & $\{7,11,13\}$& $12012$\\\hline
         $4$ &  $< 4\times 10^9$ & $\{17,19,23,29,31,37\}$& $>10^{10}$\\\hline
         $5$ &  $< 2\times10^{20}$ & $\{23,29,31,37,41,43,47,53,59,61,67,71\}$& $>10^{21}$\\\hline
         $6$ &  $< 10^{35}$ & $\{13, 19, 31, 37, 41, 43, 47, 53, 59, 61, 67, 71, 73, 79, 83, 89, 97, 101, 103\}$& $>2\times 10^{35}$\\\hline
    \end{tabular}
    \caption{The parameters used to apply the proof of Theorem \ref{thm:8runners} to the cases of $4$, $5$, $6$ and $7$ runners\label{tab:parameters}.}
\end{table}

Note that in the case $k=5$, we could have included $25$ and $32$ in $S$ (and $49$ for $k=6$). Indeed, in Lemma \ref{lem:needOfCovering} it is not required that $p$ be prime, and our implementation can be used to verify that  $k=5$ and $p=25$ respect the condition of the Lemma. However, for the sake of uniformity we decided only to use only primes here, as using powers of prime did not seem to help much for the case $k=7$ which is by far the most demanding in terms of computation.

\section{Implementation details}\label{sec:implementation}
The implementation is relatively straightforward, and can be found in the GIT repository associated with the article~\cite{gitRepo}. Before discussing the implementation we provide a reformulation of Lemma~\ref{lem:needOfCovering}.

Given $k$ and $p$, we say that $v\in \{1,\ldots,p-1\}$ \emph{covers} $j\in\mathbb{N}$ if 
$\left\|\frac{jv}{(k+1)p}\right\|< \frac{1}{k-1}$. Given $k$ and $p$, we say that a set $\{v_1,\ldots,v_i\}$ covers a set $S\subseteq\mathbb{N}$ if any $s\in S$ is covered by at least one of the $v_i$. Intuitively, if $\{v_1,\ldots,v_i\}$ does not cover $j$, then the time $\frac{j}{(k+1)p}$ is a suitable time to verify the LR property for $\{v_1,\ldots,v_i\}$.

For all positive integers $i,j,d$, $\left\|\frac{ij}{d}\right\|=\left\|\frac{(d-i)j}{d}\right\|$. This means that if $v_i$ covers $j$, then $v_i$ covers $(k+1)p-j$, and $(k+1)p-v_j$ covers $j$ and $(k+1)p-j$.
Using this notion of cover and this symmetries\footnote{Another obvious symmetry that we do not exploit is that if $i$ covers $j$, then $j$ covers $i$.}, Lemma \ref{lem:needOfCovering} can be reformulated equivalently as follows.

\begin{lemma}\label{lem:reformulated}
Let $k\in \mathbb{N}$ with $k\ge3$ such that the lonely runner conjecture holds for $k-1$ and $p\in \mathbb{N}$ be an integer.
Suppose that  there exists no $k$-uple $v_1,\ldots, v_k\in \{1,\ldots, \lfloor(k+1)p/2\rfloor\}\setminus p\mathbb{N}$ with
\begin{itemize}
\item for all subset $S\in \{v_1,\ldots, v_k\}$ of size $k-1$, we have $\gcd(S\cup\{(k+1)p\})=1$,
\item $v_1,\ldots, v_k$ covers $\{1,\ldots, \lfloor(k+1)p/2\rfloor\}$.
\end{itemize} 
Then the conclusion of Lemma \ref{lem:needOfCovering} holds, that is,  for any set of $k$ distinct integers $\{v_1, \ldots, v_k\}$ without the LR property, we have 
  \[
 p \text{ divides } \prod_{i\in \{1,\ldots, k\}}v_i.
  \]
\end{lemma}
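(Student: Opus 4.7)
The plan is to derive Lemma \ref{lem:reformulated} directly from Lemma \ref{lem:needOfCovering} by performing two elementary simplifications: rewriting the existence-of-a-good-$t$ statement as a non-covering statement, and folding the interval $\{0,\ldots,(k+1)p-1\}$ in half via the symmetry $v \mapsto (k+1)p-v$ of the fractional-part norm.

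First I would unpack the notion of covering. By construction, a $k$-tuple $(v_1,\ldots,v_k)$ fails to cover some $j\in\{1,\ldots,\lfloor(k+1)p/2\rfloor\}$ exactly when $\left\|jv_i/((k+1)p)\right\|\ge 1/(k+1)$ holds for every $i$. Hence denying the existence of a $k$-tuple that covers the whole half-range is the same as asserting that for every such $k$-tuple there is a $t=j$ in that half-range witnessing the LR-type bound, which is exactly the conclusion of Lemma \ref{lem:needOfCovering} but restricted to the half-range for both the $v_i$'s and $t$.

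Next I would check that halving the range is harmless. For any $v\in\{0,\ldots,(k+1)p-1\}$, set $v':=\min(v,(k+1)p-v)\in\{0,\ldots,\lfloor(k+1)p/2\rfloor\}$. The three invariances I need are: (i) $\|jv'/((k+1)p)\|=\|jv/((k+1)p)\|$ for every $j$; (ii) $\gcd(v',(k+1)p)=\gcd(v,(k+1)p)$, so replacing each $v_i$ by $v_i'$ leaves every GCD of a $(k-1)$-subset together with $(k+1)p$ unchanged; (iii) $p\mid v'$ iff $p\mid v$. The hypothesis $p\nmid v_i$ forces $v_i\ne 0$, so the folded speeds land in $\{1,\ldots,\lfloor(k+1)p/2\rfloor\}\setminus p\mathbb{N}$. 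The same symmetry, combined with the observation that $t=0$ is never good, lets me restrict candidate times to $\{1,\ldots,\lfloor(k+1)p/2\rfloor\}$ as well.

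Putting this together: assume the hypothesis of Lemma \ref{lem:reformulated} and take any $v_1,\ldots,v_k\in\{0,\ldots,(k+1)p-1\}$ satisfying the two bullets of Lemma \ref{lem:needOfCovering}. Fold them to $v_1',\ldots,v_k'$; by hypothesis these do not cover the whole half-range, so some $t$ in that range satisfies the required bound for every $v_i'$, and by invariance (i) the same $t$ works for the original $v_i$'s. Thus the hypothesis of Lemma \ref{lem:needOfCovering} is satisfied and its conclusion transfers verbatim. I do not anticipate any substantial obstacle; the one point that deserves a careful line is invariance (ii), which is just $\gcd(a,n)=\gcd(n-a,n)$ applied coordinate-wise.
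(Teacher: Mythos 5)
Your proof is correct and takes the same route the paper implies (the paper states the equivalence as a consequence of the symmetry $\|ij/d\|=\|(d-i)j/d\|$ without writing out the reduction). Two small points worth noting if you write this up fully: the paper's definition of ``covers'' has the typo $\frac{1}{k-1}$ and writes $v\in\{1,\ldots,p-1\}$, whereas the threshold should read $\frac{1}{k+1}$ and the range should match the half-interval — you silently used the correct versions, which is fine; and one should remark that after folding $v_i\mapsto v_i'$ the resulting $k$-tuple may have repeated entries, but the GCD condition is read over index subsets (a multiset interpretation), so invariance (ii) still transfers correctly.
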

With this formulation, verifying this property is similar to a set cover problem with the extra-constraint that some solutions are forbidden by the first condition. Our program implements basic backtracking techniques to exhaustively search for a "bad" cover for a selected pair $k$ and $p$. If we do not find such a cover we deduce that Lemma \ref{lem:reformulated} (and Lemma \ref{lem:needOfCovering}) can be applied to this choice of $k$ and $p$.

The first obvious and simple optimization is to precompute for each $i\in \{1,\ldots, \lfloor(k+1)p/2\rfloor\}$ the set of positions covered by $i$. Storing them as vectors of bits (\verb!bitsets! in C++), we can exploit the fact that processors are really efficient at doing multiple bitwise operations at once. 

The second optimization is to avoid considering all the $k$-uples by constructing $\{v_1,\ldots,v_k\}$ element by element and to cut branches as much as we can. That is, for any partial choice of the $v_i$ if we can then detect that there is no way to complete the choice of $v_i$ in a cover of  $\{1,\ldots, \lfloor(k+1)p/2\rfloor\}$ then we go to the next partial choice and do not even consider the completions of the current partial cover. To cut branches, we can simply compute at every iteration how many integers are not covered yet, how many new integers can be covered by the best remaining integer and how many elements are we allowed to add to the cover. We can also verify the first condition about subsets of size $k-1$ as soon as we have chosen the $k-1$ first elements.

Since $\{v_1,\ldots,v_k\}$ is taken as a set the order of the elements does not matter, which leads to two other simple optimizations. Whenever we finish the backtracking induced by a partial cover $\{v_1,\ldots,v_i, v_{i+1}\}$, we remember for all future $\{v_1,\ldots,v_i, v_{i+1}'\}$, that there is no reason to try to add $v_{i+1}$ (since we already know that no valid cover contains $\{v_1,\ldots,v_i, v_{i+1}\}$), so we eliminate it from the remaining available integers for the completions of the partial cover $\{v_1,\ldots,v_i\}$.

Lastly, for a given partial cover $\{v_1,\ldots,v_i\}$, we compute the integer $u$ that is covered by the smallest number of integers amongst the available integers. One of the remaining $v_j$ has to cover $u$, so we can require $u$ to be covered by $v_{i+1}$ without losing any solution. This minimizes the number of possible values that can be chosen for $v_{i+1}$ for a given choice of $\{v_1,\ldots,v_i\}$.

With all this, verifying the condition of Lemma \ref{lem:reformulated} for $k=7$ and $p=163$ takes roughly 32 hours with our implementation on a single processor core (on the machine used by the author).

\section{Possible further improvement} 
Based on how easy it was to find the list of primes once we had the right approach, we would conjecture that, ignoring the computational cost, this approach could probably be used for any number of runners. It could be the case that for any $k$, any large enough prime $p$ has the property needed to apply Lemma \ref{lem:needOfCovering}. For $k=3,4,5$ it seems to be true for all prime $p$ larger than $7$, $17$ and $23$ respectively. For $k=6$, it seems to be true for all prime $p$ larger than $31$ and for $13$ and $19$ (it is not true for $17$, $23$ and $29$). For $k=7$, it seems that any prime $p$ at least $31$ other than $41$ works. As already noted, it also works for at least some powers of primes (e.g., $k=5$ and $p\in\{25,32\}$).

On the other hand, the upper bound given by Corollary \ref{cor:UpperBoundOnProduct} grows fast as a function of $k$ and so the primes $p$ for which we need to verify Lemma \ref{lem:needOfCovering} grow fast as well. Our exhaustive search to verify the property does not scale well with the size of the primes, but improving different details in the approach might be enough to verify the Lonely Runner conjecture for larger values of $k$. In fact, simply replacing our ad-hoc backtracking algorithm implementation by state-of-the-art solvers might be enough for $9$ runners ($k=8$). It might also be possible to come up with variants of Lemma \ref{lem:needOfCovering}, that allow for instance to prove that at least two of the speeds are divisible by a given number $p$.

The first condition of Lemma \ref{lem:needOfCovering} is used to cut many branches. For $k=7$, one condition that could be added is that at most $k-3$ elements can be divisible by $4$ (we do not provide proof of this claim as it only cuts few branches and does not improve the running time). One might hope to find a collection of other stronger conditions that would actually reduce the search space.

There might even be a good provable reason for which Lemma \ref{lem:needOfCovering} holds for all $k$, for any large enough integer $p$. That could lead to a proof of the lonely runner conjecture.

\section{Improving the lacunary case}
\label{sec:lacunary}
In this context, an increasing sequence $(a_i)_{i\in\mathbb{N}}$ is said to be lacunary if there exists $\delta>0$ such that for all $i$, $\frac{a_{i+1}}{a_i}\ge 1+\delta$. If the set of speeds is sufficiently lacunary (that is, $1+\delta$ is large enough) then the lonely runner conjecture holds for this set, and there has been effort to lower the $1+\delta$ bound that implies the LR property.

Pandey showed that if for all $i$, $\frac{v_{i+1}}{v_i}\ge\frac{2(k+1)}{k-1}$, then the set $\{v_1,\ldots, v_k \}$ has the LR property \cite{pandey2009note}. Ruzsa, Tuza and Voigt had already proven a similar inequality \cite{RUZSA2002181}, and Barajas and Serra improved the constant $2(k+1)/(k-1)$ to $2$ \cite{BARAJAS20095687}. This was later improved by Dubickas, who reduced the constant to roughly $1+8e\frac{\log k}{k}$ in the following result (where $e\approx2.718$ is the base of the exponential function).

\begin{theorem}[{\cite[Corollary 1.2]{DubickasLacunary}}]
Suppose that $c$ is a constant strictly greater than $8e$. Then there is a positive integer $k(c)$ such that for each integer $k\ge k(c)$ and each set of $k$ positive numbers $v_1<v_2<\ldots<v_k$ satisfying
$\frac{v_{j+1}}{v_j}\ge 1+c\frac{\log k}{k}$ for all $j\in\{1,\ldots,n-1\}$ there is a positive number $t$ such that  $\|v_it\|\ge 1/(k+1)$ for each $i\in\{1,2,\ldots,k\}$.
\end{theorem}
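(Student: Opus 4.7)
The plan is to run a quantitative probabilistic argument. I would pick $t$ uniformly from a large interval $[0, T]$ and, for each $i$, let $B_i = \{t : \|v_i t\| < 1/(k+1)\}$ be the bad event; each $B_i$ has measure density approaching $\epsilon := 2/(k+1)$ as $T v_i \to \infty$. The goal is to prove $P(\bigcap_i B_i^c) > 0$. The naive union bound gives total probability $k\epsilon \approx 2 > 1$, so it is useless; instead, I would exploit the approximate independence of the $B_i$ enforced by lacunarity.

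The key quantitative input I would establish is that for $i < j$, the indicator $\mathbf{1}_{B_j}$ has period $1/v_j$, so averaging it over a single cell of $\mathbf{1}_{B_i}$ (of length on the order of $1/v_i$) returns $P(B_j) + O(v_i/v_j)$. Iterating this gives
\[
\left|\,P\Bigl(\bigcap_{i \in S} B_i\Bigr) - \prod_{i \in S} P(B_i)\right| \le C \sum_{\substack{i,j \in S \\ i < j}} \frac{v_i}{v_j}
\]
for every subset $S$, with $C$ an absolute constant. Under the hypothesis $v_{j+1}/v_j \geq \lambda := 1 + c \log k / k$, the ratios $v_i/v_j$ decay geometrically and $\sum_{i<j} v_i/v_j \leq k/(\lambda-1) = k^2/(c \log k)$. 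Plugged into the inclusion–exclusion expansion of $P(\bigcap B_i^c)$, the main term is $(1 - \epsilon)^k = \bigl((k-1)/(k+1)\bigr)^k$, which tends to $e^{-2}$ from above; the task then reduces to showing that the accumulated error is strictly smaller than this main term once $k$ is sufficiently large as a function of $c$.

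The main obstacle will be this error control: naively summing the independence defect over the $2^k$ subsets that appear in inclusion–exclusion swamps the main term. Overcoming it will require regrouping contributions by the pairs $(i, j)$ that produce them and exploiting the geometric decay of $v_i/v_j$, or equivalently, running the argument as a recursive integral estimate on $\int_0^T \prod_i \mathbf{1}_{B_i^c}(t) \, dt$ in which the loss at step $i$ is of order $v_{i-1}/v_i$ and the cumulative loss is $O(k/(\lambda - 1))$. The explicit constant $8e$ in the hypothesis $c > 8e$ should emerge from the final numerical optimization: the factor $e$ reflects the exponential limit $(1 - \epsilon)^k \to e^{-2}$, and the factor $8$ absorbs combinatorial and boundary overheads in the error bound. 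Once $c > 8e$ and $k \geq k(c)$ for a suitable threshold, the error is dominated by the main term, so $P(\bigcap B_i^c) > 0$ and a common lonely time $t$ exists.
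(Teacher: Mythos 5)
The statement you are attempting to prove is a cited external result (Dubickas, Corollary~1.2); this paper does not re-prove it. What the paper actually does is recall Dubickas' bound and then prove a \emph{stronger} version (constant $3e$ instead of $8e$, and no ``$k$ large enough'' hypothesis) by a completely different route: it invokes an off-the-shelf theorem (Theorem~9 of \cite{Rosenfeld2025Mar}) asserting that if a set $T$ meets every dyadic interval $[2^{j},2^{j+1})$ in at most $C$ points and $3eCz\le 2^{z}$, then some $\theta\in[0,1]$ satisfies $\|\theta t\|\ge 2^{-z}$ for all $t\in T$. The lacunarity hypothesis is converted into the dyadic-density bound $|T\cap[2^{j},2^{j+1})|\le\frac{k+1}{3e\log_2(k+1)}$ by a one-line computation, and setting $z=\log_2(k+1)$ finishes the argument. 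All the analytic difficulty is pushed into that cited theorem; the corollary in this paper is purely arithmetic bookkeeping.

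Your probabilistic sketch is a genuinely different strategy, but as written it has a real gap precisely where you flag one. You correctly observe that the naive inclusion--exclusion error dwarfs the main term $(1-\epsilon)^k\sim e^{-2}$: with $\lambda=1+c\log k/k$ you get $\sum_{i<j}v_i/v_j\le k/(\lambda-1)=k^{2}/(c\log k)$, which diverges. The two remedies you propose --- ``regrouping contributions by the pairs'' and ``a recursive integral estimate'' --- are precisely the hard part and are not carried out. Regrouping by pairs does not obviously help, since the pair sum is already too large by a factor of roughly $k^{2}/\log k$; and the ``recursive integral estimate'' is essentially a gesture toward the Peres--Schlag decoupling machinery, which requires a delicate multiscale construction of a Cantor-type set of surviving times, where the loss at each dyadic scale is controlled by the measure remaining after the previous scales --- not simply by the raw ratio $v_{i-1}/v_i$. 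Without that construction you cannot conclude the accumulated loss stays below the main term, nor does the heuristic ``$e$ from $(1-\epsilon)^k$, $8$ from overhead'' actually locate the threshold $8e$. In short: right obstacle identified, but the core of the proof --- the step the paper sidesteps entirely by citing a dyadic-density theorem --- is absent.
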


This result was later slightly improved by Czerwiński who relaxed the lacunary condition to hold from $j$ starting at $\lfloor\frac{k+1}{8e}\rfloor$ instead of $j=1$ at the cost of increasing the constant $c$ \cite[Theorem 4]{CZERWINSKI20181301}.

We will see that as a corollary of the following result $c$ can in fact be lowered to $3e$ (the lacunary constant is lowered to $1+3e\frac{\log k}{k}$) and the condition that $k$ is large enough can be removed.
\begin{theorem}[{\cite[Theorem 9]{Rosenfeld2025Mar}}]
   Let $T$ be a set of positive real numbers such that
    \[
      | T \cap [2^{j}, 2^{j+1})| \le C
    \]
    for all integers $j$ and some constant $C$.
    Assume that for some integer $z\ge 3$ the inequality
    \begin{equation*}
      3eCz\le2^z \eqno(*)
    \end{equation*}
    holds.
    Then there exists a real $\theta\in[0,1]$ such that for all $t\in T$,
    \[
     \|\theta t\|\ge2^{-z}\,.
    \]
\end{theorem}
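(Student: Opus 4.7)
The form of the hypothesis $3eCz\le 2^z$---with the telltale factor $e$, the precision $2^{-z}$, and a linear $Cz$ dependency degree---points directly toward the Lovász Local Lemma. I would take $\theta$ uniformly at random in $[0,1]$ and define the bad events
\[
 B_t := \{\theta\in[0,1] : \|\theta t\| < 2^{-z}\}, \qquad t \in T,
\]
aiming to prove $\Pr\bigl[\bigcap_{t\in T}\overline{B_t}\bigr]>0$. If this positivity holds, then some $\theta\in[0,1]$ satisfies $\|\theta t\|\ge 2^{-z}$ for every $t\in T$, as required.

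The first step is a probability bound: substituting $x=\theta t$ shows that $\Pr[B_t]\le 2(\lfloor t\rfloor+1)\,2^{-z}/t \le 2(1+1/t)\,2^{-z}$, which tends to $2\cdot 2^{-z}$ as $t\to\infty$. The second step is to expose a dependency structure. Writing $\theta=\sum_{n\ge 1}\omega_n 2^{-n}$ in binary with i.i.d.\ Bernoulli$(1/2)$ bits, for $t\in T\cap[2^j,2^{j+1})$ the dominant contribution to the fractional part of $\theta t$ comes from the length-$z$ window $W_t:=\{j+1,\ldots,j+z\}$ of bits, since the tail from bits $\omega_n$ with $n>j+z$ contributes at most $t\cdot 2^{-(j+z)}\le 2^{1-z}$. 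I would replace $B_t$ by a slight enlargement $B_t^\star$ that depends only on $W_t$, inflating the probability by a small explicit factor. Two such enlarged events $B_t^\star, B_s^\star$ are then mutually independent whenever $W_t\cap W_s=\emptyset$, i.e.\ whenever the dyadic indices of $t,s$ differ by at least $z$; and the density hypothesis $|T\cap[2^j,2^{j+1})|\le C$ bounds the dependency degree of each $B_t^\star$ by $(2z-1)C-1$.

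Applying the symmetric Lovász Local Lemma to the family $\{B_t^\star\}$ yields $\Pr\bigl[\bigcap\overline{B_t^\star}\bigr]>0$ as soon as $e\cdot \Pr[B_t^\star]\cdot(2z-1)C\le 1$, and after absorbing the constants from Step~2 this reduces (using $z\ge 3$) to the stated hypothesis $3eCz\le 2^z$. I expect the main obstacle to be in Step~2 for two distinct reasons. First, for $t$ not a power of~$2$, the low-order bits $\omega_n$ with $n\le j$ \emph{also} shift the fractional part of $\theta t$ (through the fractional parts $\{t/2^n\}$), so defining $B_t^\star$ to genuinely depend only on $W_t$ requires a careful conditioning or averaging argument rather than a mere truncation. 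Second, the sharp constant $3e$ in the hypothesis---rather than the looser $4e$ that a naive probability--degree count produces---suggests that an asymmetric or cluster-expansion form of the local lemma is needed, or else a tighter probability estimate on $B_t^\star$ exploiting the fine arithmetic structure of the bad set, to shave the last factor of~$4/3$.
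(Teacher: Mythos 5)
A preliminary point: this article does not prove the stated theorem; it imports it verbatim as \cite[Theorem 9]{Rosenfeld2025Mar} and uses it as a black box for the lacunary corollary that follows, so the only meaningful comparison is with the cited source rather than with anything in this document.

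Your plan follows the classical Peres--Schlag route: results of exactly this shape (a $\theta$ keeping $\|\theta t\|\ge 2^{-z}$ for a family $T$ of bounded dyadic density) were first obtained by an argument that is in spirit a sequential, nested-dyadic-intervals form of the Lov\'asz Local Lemma, and your sketch retraces it. The two obstacles you flag are the genuine ones, not cosmetic. The low-order bits $\omega_n$, $n\le j$, really do move $\{\theta t\}$ through the fractional parts $\{t 2^{-n}\}$, so no truncated event $B_t^\star$ literally depends only on the window $W_t$; the standard repair is to run a top-down conditional construction on nested intervals rather than a one-shot product-space LLL, which fixes independence but costs constants. And the naive $e\cdot p\cdot d\le 1$ count does land at $4e$ or worse, so getting to $3e$ would require Shearer-type / cluster-expansion machinery or a substantially sharper bad-event estimate, none of which your proposal supplies; this is the harder half of the work and is currently missing.

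The cited proof, by contrast, does not go through the LLL at all. It is an instance of the author's weighted counting method: $\theta$ is built one binary digit at a time while a weighted count of surviving dyadic prefixes is maintained and shown to stay strictly positive by a per-step inequality. That route is local by construction, so the conditioning issue you worry about never arises; it produces the $3e$ constant directly without any LLL refinement; and it is effective for every $z\ge 3$ rather than only asymptotically. So your approach is a legitimate alternative and would plausibly give the theorem with a weaker constant, but it is genuinely different from---and, as your own step-2 caveats already indicate, harder to push to the stated constant than---the argument in \cite{Rosenfeld2025Mar}.
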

Choosing $z=\log_2(k+1)$, $C=\frac{k+1}{3e\log_2(k+1)}$ so the first condition becomes    
\[
    | T \cap [2^{j}, 2^{j+1})| \le \frac{k+1}{3e\log_2(k+1)}\,.
\]
Using the fact that 
\[
\left[(k+1)^{3e/(k+1)}\right]^{ \frac{k+1}{3e\log_2(k+1)}}=2\,,
\]
we deduce the following corollary.
\begin{corollary}
Let $(v_i)_{i\ge0}$ be a sequence of positive real numbers such that for all $i$, $\frac{v_{j+1}}{v_j}\ge (k+1)^{3e/(k+1)}$
Then there exists a real $\theta\in[0,1]$ such that for all $i$,
\[
  \|\theta v_i\|\ge\frac{1}{k+1}\,.
\]
\end{corollary}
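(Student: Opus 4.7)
The plan is to apply Theorem 9 of \cite{Rosenfeld2025Mar} (quoted just above) directly to the set $T = \{v_i : i \ge 0\}$, using the parameters suggested by the computation preceding the corollary:
\[
  z = \log_2(k+1), \qquad C = \frac{k+1}{3e\log_2(k+1)}.
\]

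First I would establish the counting hypothesis $|T \cap [2^j, 2^{j+1})| \le C$. Writing $r := (k+1)^{3e/(k+1)}$ for the lacunary ratio, the identity
\[
  r^{C} = (k+1)^{\frac{3e}{k+1}\cdot\frac{k+1}{3e\log_2(k+1)}} = (k+1)^{1/\log_2(k+1)} = 2
\]
(the one displayed in the paper) shows that any $C+1$ consecutive terms of the sequence span a ratio of at least $r^{C} = 2$, and therefore cannot all lie in a single dyadic interval $[2^j, 2^{j+1})$. Hence no dyadic slab contains more than $C$ terms of $T$.

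Second I would verify the inequality $3eCz \le 2^z$. Substituting the chosen values gives
\[
  3eCz = 3e \cdot \frac{k+1}{3e\log_2(k+1)} \cdot \log_2(k+1) = k+1 = 2^z,
\]
so the condition is satisfied with equality. Theorem 9 then produces $\theta \in [0,1]$ such that $\|\theta v_i\| \ge 2^{-z} = 1/(k+1)$ for every $i$, which is exactly the conclusion of the corollary.

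The main obstacle is that Theorem 9 requires $z$ to be an \emph{integer} at least $3$, while $\log_2(k+1)$ is typically irrational. A rigorous write-up must replace $z$ and $C$ by integer values (for instance $z = \lfloor \log_2(k+1) \rfloor$ and $C = \lceil (k+1)/(3e\log_2(k+1)) \rceil$) and verify that both $3eCz \le 2^z$ and $2^{-z} \ge 1/(k+1)$ survive the rounding; this is the point with the least slack, since the computation above is essentially tight when $k+1$ is a power of two. The finitely many small values of $k$ for which $\lfloor \log_2(k+1) \rfloor < 3$ can be treated separately, since in that range the lonely runner conjecture itself is already known and gives the stronger uniform conclusion.
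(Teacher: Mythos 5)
Your approach is the same as the paper's: feed the parameter choices $z=\log_2(k+1)$, $C=\frac{k+1}{3e\log_2(k+1)}$ into Theorem 9 and check the two hypotheses via the identity $r^C=2$. Where you differ from the paper is in explicitly raising the integrality of $z$ (Theorem 9, as quoted, demands an \emph{integer} $z\ge3$), a point the paper silently passes over. You are right to flag it, but your suggested repair does not close the gap. The difficulty is not merely that the bounds are tight when $k+1$ is a power of two --- they are too tight to survive any rounding at all. Taking $z=\lfloor\log_2(k+1)\rfloor$ (necessary so that $2^{-z}\ge 1/(k+1)$), the best upper bound on $|T\cap[2^j,2^{j+1})|$ that the lacunary hypothesis delivers is $\lceil C_0\rceil$ where $C_0=\frac{k+1}{3e\log_2(k+1)}$, since $m$ terms in one dyadic slab force $r^{m-1}<2$, hence $m\le\lceil\log_r 2\rceil=\lceil C_0\rceil$; and this ceiling is unavoidable (for nonempty $T$ one must even have $C\ge 1$). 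With $z\le\log_2(k+1)$ and $C\ge\lceil C_0\rceil$, the product $3eCz$ exceeds $3eC_0\log_2(k+1)=k+1\ge 2^z$, so the required inequality $3eCz\le 2^z$ fails. Concretely, for $k+1=9$ one is forced to take $z=3$, $C\ge 1$, and then $3eCz\ge 9e\approx 24.5>8=2^3$; for $k+1=1024$, $z=10$, $C\ge 13$, and $3eCz\ge 390e\approx 1060>1024$.

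So both the paper's and your deduction, read literally against the quoted Theorem 9, leave a genuine hole: either the original statement of Theorem~9 in \cite{Rosenfeld2025Mar} has more slack than the version reproduced here (for instance, allows real $z$, or carries a stricter hypothesis/weaker conclusion), or the corollary requires a separate argument for the range of $k$ where the rounding loss is fatal. Your closing suggestion --- handle small $k$ via the known cases of the lonely runner conjecture --- also does not apply, because the corollary concerns an \emph{infinite} lacunary sequence, not a $k$-element speed set, so the finite LRC cases give no information. To finish the proof you would need to consult the exact form of Theorem 9 and redo the bookkeeping with the integer $z$ and the actual counting bound $\lceil C_0\rceil$, rather than the idealized real-valued identity.
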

Using the inequality $e^x>1+x$ for all $x>0$, we can replace our main condition by $\frac{v_{j+1}}{v_j}\ge 1+\frac{3e\log(k+1)}{k+1}$ as desired.

\bibliography{biblio}{}
\bibliographystyle{abbrv}

\end{document}